\newtheorem{theorem}{Theorem}
\theoremstyle{plain}
\newtheorem{corollary}{Corollary}
\newtheorem{definition}{Definition}
\newtheorem{example}{Example}
\newtheorem{proposition}{Proposition}
\numberwithin{equation}{section}
\begin{document}
\title[On Quasi $n$-absorbing Elements of Multiplicative Lattices]{On Quasi $%
n$-absorbing Elements of Multiplicative Lattices}
\author{ Ece Yetkin Celikel}
\address{Department of Mathematics, Gaziantep University, Gaziantep, Turkey}
\email{yetkin@gantep.edu.tr}
\date{March 2, 2016}
\subjclass[2000]{13A15}
\keywords{2-absorbing elements, n-absorbing elements, multiplicative
lattices, quasi $n$-absorbing elements}

\begin{abstract}
In this study, we introduce the concept of quasi $n$-absorbing elements of
multiplicative lattices. A proper element $q$ is said to be a quasi $n$%
-absorbing element of $L$ if whenever $a^{n}b\leq q$ implies that either $%
a^{n}\leq q$ or $a^{n-1}b\leq q$. We investigate properties of this new type
of elements and obtain some relations among prime, 2-absorbing, $n$%
-absorbing elements in multiplicative lattices.
\end{abstract}

\maketitle

\section{Introduction}

In this paper we define and study quasi $n$-absorbing elements in
multiplicative lattices. A multiplicative lattice is a complete lattice $L$
with the least element $0$ and compact greatest element $1,$ on which there
is defined a commutative, associative, completely join distributive product
for which $1$ is a multiplicative identity. Notice that $L(R)$ the set of
all ideals of a commutative ring $R$ is a special example for multiplicative
lattices which is principally generated, compactly generated and modular.
However, there are several examples of non-modular multiplicative lattices
(see \cite{fd}) Weakly prime ideals \cite{ds} were generalized to
multiplicative lattices by introducing weakly prime elements in \cite{fct}.
While 2-absorbing, weakly 2-absorbing ideals in commutative rings, and $n$%
-absorbing ideals were introduced in \cite{b}, \cite{bd}, and \cite{Dand},
2-absorbing and weakly 2-absorbing elements in multiplicative lattices were
studied in \cite{JTY}.

We begin by recalling some background material which will be needed. An
element $a$ of $L$ is said to be compact if whenever $a\leq
\dbigvee\limits_{\alpha \in I}a_{\alpha }$ implies $a\leq
\dbigvee\limits_{\alpha \in I_{0}}a_{\alpha }$ for some finite subset $I_{0}$
of $I$. By a $C$-lattice we mean a (not necessarily modular) multiplicative
lattice which is generated under joins by a multiplicatively closed subset $%
C $ of compact elements of $L$. We note that in a $C$-lattice, a finite
product of compact elements is again compact. Throughout this paper $L$ and $%
L_{\ast }$ denotes a multiplicative lattice and the set of compact elements
of the lattice $L$, respectively. An element $a$ of $L$ is said to be proper
if $a<1.$ A proper element $p$ of $L$ is said to be prime (resp. weakly
prime) if $ab\leq p$ (resp. $0\neq ab\leq p)$ implies either $a\leq p$ or $%
b\leq p.$ If $0$ is prime, then $L$ is said to be a domain. A proper element 
$m$ of $L$ is said to be maximal if $m<x\leq 1$ implies $x=1.$ The jacobson
radical of a lattice $L$ is defined as $J(L)=\wedge \{m$ $:$ $m$ is a
maximal element of $L\}.$ $L$ is said to be quasi-local if it contains a
unique maximal element. If $L=\left\{ 0,1\right\} $, then $L$ is called a
field. For $a\in L,$ we define radical of $a$ as $\sqrt{a}=\wedge \{p\in L:p$
is prime and $a\leq p\}.$ Note that in a $C$-lattice $L$, $\sqrt{a}=\wedge
\{p\in L:p$ is prime and $a\leq p\}=\vee \{x\in L_{\ast }\mid x^{n}\leq a$
for some $n\in Z^{+}\}$ by (Theorem 3.6 of \cite{nk}). The nilpotent
elements of $L$ is the set of$\ Nil(L)=\sqrt{0}$. $C$-lattices can be
localized. For any prime element $p$ of $L$, $L_{p}$ denotes the
localization at $F$ $=$ $\{x\in C\mid x\nleq p\}$. For details on $C$%
-lattices and their localization theory, the reader is referred to \cite{jw}
and \cite{js}. An element $e\in L$ is said to be principal \cite{dil}, if it
satisfies the dual identities (i) $a\wedge be=((a:e)\wedge b)e$ and (ii) $%
(ae\vee b):e=(b:e)\vee a$. Elements satisfying the identity (i) are called
meet principal and elements satisfying the identity (ii) are called join
principal. Note that by \cite[Lemma 3.3 and Lemma 3.4]{dil}, a finite
product of meet (join) principal elements of $L$ is again meet (join)
principal. If every element of $L$ is principal, then $L$ is called a
principal element lattice which is studied in \cite{dd2}.

Recall from \cite{JTY} that a proper element $q$ of $L$ is called a $2$%
-absorbing (resp. weakly $2$-absorbing) element of $L$ if whenever $a,b,c\in
L$ with $abc\leq q$ (resp. $0\neq abc\leq q),$ then either $ab\leq q$ or $%
ac\leq q$ or $bc\leq q$. We say that $\ (a,b,c)$ is a triple zero element of 
$q$ if $abc=0$, $ab\not\leq q$, $ac\not\leq q$ and $bc\not\leq q$. Observe
that if $q$ is a\ weakly 2-absorbing element which is not a 2-absorbing,
then there exist a triple zero of $q.$ As a generalization of 2-absorbing
elements, a proper element $q$ of $L$ is called a $n$-absorbing (resp.
weakly $n$-absorbing) element of $L$ if whenever $a_{1}a_{2}...a_{n+1}\leq q$
$($ resp. $0\neq a_{1}a_{2}...a_{n+1}\leq q)$ for some $a_{1}a_{2}...a_{n+1}%
\in L_{\ast }$ implies that $a_{1}a_{2}...a_{k-1}a_{k+1}...a_{n+1}\leq q$
for some $k=1,...,n+1.$

\section{Quasi $n$-absorbing and weakly quasi $n$-absorbing elements}

Let $L$ be a multiplicative lattice and $n$ be a positive integer. In this
section, we introduce quasi $n$-absorbing and weakly quasi $n$-absorbing
elements of $L$ and investigate their basic properties.

\begin{definition}
\label{d1}Let $q$ be a proper element of $L.$
\end{definition}

\begin{enumerate}
\item $q$ is said to be a quasi $n$-absorbing element of $L$ if whenever $%
a^{n}b\leq q$ for some $a,b\in L_{\ast }$ implies that either $a^{n}\leq q$
or $a^{n-1}b\leq q.$

\item $q$ is said to be a weakly quasi $n$-absorbing element of $L$ if
whenever $0\neq a^{n}b\leq q$ for some $a,$ $b\in L_{\ast }$ implies that
either $a^{n}\leq q$ or $a^{n-1}b\leq q$.
\end{enumerate}

We can obtain some relations among the concepts of prime, 2-absorbing, $n$%
-absorbing, quasi $n$-absorbing elements and weakly quasi $n$-absorbing
elements by the following:

\begin{theorem}
\label{relations}Let $q$ be a proper element of $L$ and $n\geq 1.$ Then the
following statements hold:
\end{theorem}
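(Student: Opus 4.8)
The plan is to read each assertion as an implication between two of the five absorbing notions and to reduce it to a single manipulation of a witnessing inequality $a^{n}b\leq q$. Two elementary facts will carry most of the argument. First, the product in $L$ is completely join-distributive and hence monotone, so $x\leq 1$ gives $xq\leq 1\cdot q=q$; that is, multiplying $q$ by any element keeps us below $q$, and since $a\leq 1$ we have $a^{k}\leq 1$ for every $k$. Second, in a $C$-lattice a finite product of compact elements is again compact (as recalled in the introduction), so whenever I regroup the factors of $a^{n}b$ the new lumped factors still lie in $L_{\ast}$ and the definitions remain applicable.

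For the implication \emph{$n$-absorbing $\Rightarrow$ quasi $n$-absorbing}, I would take $a^{n}b\leq q$ with $a,b\in L_{\ast}$ and read the left-hand side as a product of the $n+1$ compact factors $a,\dots ,a,b$. The $n$-absorbing hypothesis forces some product of $n$ of these to lie below $q$; but every such subproduct is either $a^{n}$ (when $b$ is omitted) or $a^{n-1}b$ (when one copy of $a$ is omitted), which is precisely the quasi $n$-absorbing conclusion. Taking $n=2$ gives the special case \emph{$2$-absorbing $\Rightarrow$ quasi $2$-absorbing} at no extra cost. The implication \emph{prime $\Rightarrow$ quasi $n$-absorbing} is even more direct: writing $a^{n}b\leq q$ as $a^{n}\cdot b\leq q$ and applying primeness yields $a^{n}\leq q$ or $b\leq q$, and in the latter case $a^{n-1}b\leq a^{n-1}q\leq q$ by the absorption fact, so the conclusion holds for every $n\geq 1$ (and for $n=1$ this merely restates that quasi $1$-absorbing is primeness restricted to compact elements).

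The step I expect to be the main obstacle is \emph{quasi $n$-absorbing $\Rightarrow$ quasi $(n+1)$-absorbing}, since it needs a genuine substitution rather than a reading of factors. Starting from $a^{n+1}b\leq q$, I would rewrite the left side as $a^{n}(ab)\leq q$ and apply the quasi $n$-absorbing property with the compact element $c=ab$ in the role of $b$ (compactness of $c$ is exactly where the $C$-lattice hypothesis enters). This gives $a^{n}\leq q$ or $a^{n-1}(ab)=a^{n}b\leq q$. In the second case we already have the desired $a^{n}b\leq q$; in the first case, multiplying by $a$ and using $a^{n+1}=a\cdot a^{n}\leq aq\leq q$ yields $a^{n+1}\leq q$. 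Either branch delivers the quasi $(n+1)$-absorbing conclusion, and iterating this step upgrades primeness to quasi $n$-absorbing for all $n$, reconfirming the previous paragraph.

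Finally, \emph{quasi $n$-absorbing $\Rightarrow$ weakly quasi $n$-absorbing} requires no work beyond observing that the weak hypothesis $0\neq a^{n}b\leq q$ is a restriction of the ordinary hypothesis $a^{n}b\leq q$, so any conclusion valid under the latter holds a fortiori under the former. The only points demanding care throughout are the repeated use of monotonicity to absorb surplus factors into $q$ and the verification that each regrouped factor remains compact; once these are secured, every asserted implication collapses to one of the three patterns above.
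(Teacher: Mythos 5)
Your proposal is correct and takes essentially the same route as the paper: for (3) you read $a^{n}b$ as a product of $n+1$ compact factors and invoke the $n$-absorbing hypothesis, and for (5) you absorb surplus powers of $a$ into the second argument and apply the quasi $n$-absorbing property to a compact product --- the paper does this in a single step via $a^{m}b=a^{n}(a^{m-n}b)$ where you iterate $n\to n+1$, an inessential difference. Items (1), (2), (4), (6) are dispatched as immediate or parallel to the others in both treatments.
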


\begin{enumerate}
\item $q$ is a prime element of $L$ if and only if $\ q$ is a quasi $1$%
-absorbing element of $L.$

\item $q$ is a weakly prime element of $L$ if and only if $\ q$ is a weakly
quasi $1$-absorbing element of $L.$

\item If $q$ is $n$-absorbing, then $q$ is a quasi $n$-absorbing element of $%
L$.

\item If $q$ is quasi $n$-absorbing, then $q$ is a weakly quasi $n$%
-absorbing element of $L$.

\item If $q$ is a quasi $n$-absorbing element of $L$, then $q$ is a quasi $m$%
-absorbing element of $L$ for all $m\geq n.$

\item If $q$ is weakly quasi $n$-absorbing, then $q$ is a weakly quasi $m$%
-absorbing element of $L$ for all $m\geq n.$
\end{enumerate}

\begin{proof}
(1) and (2) are clear from definition \ref{d1}. Since (4) and (6) can be
shown very similar to (3) and (5), respectively, it is sufficient to prove
(3) and (5) only.

(3) Suppose that $q$ is a $n$-absorbing element of $L$ and assume that $%
a^{n}b\leq q$ for some $a,b\in L_{\ast }.$ Then $\underset{n\text{ times}}{%
\underbrace{a...a}b}$ $\leq q$ implies that either $a^{n}\leq q$ or $%
a^{n-1}b\leq q$, we are done.

(5) Suppose that $q$ is a quasi $n$-absorbing element of $L,$ and let $%
a,b\in L_{\ast }$ with $a^{m}b\leq q$ for some $m\geq n$. Hence $%
a^{n}(a^{m-n}b)\leq q$. Since $q$ is quasi $n$-absorbing, we have either $%
a^{n}\leq q$ or $a^{n-1}(a^{m-n}b)\leq q$. It means either $a^{m}\leq q$ or $%
a^{m-1}b\leq q$. This shows that $q$ is a quasi $m$-absorbing element of $L.$
\end{proof}

We have the following corollary as a direct result of Theorem \ref{relations}%
:

\begin{corollary}
\label{c1}Let $q$ be a proper element of $L.$
\end{corollary}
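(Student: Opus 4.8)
The plan is to derive every clause of the corollary purely by composing the implications already proved in Theorem~\ref{relations}; no new computation with products in $L$ is required, so the proof will be a short chain of ``if--then'' steps rather than a direct verification from the definitions of (weakly) quasi $n$-absorbing elements.

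Concretely, for the assertion that a prime element $q$ is quasi $n$-absorbing for all $n\geq 1$, I would first use part~(1) of Theorem~\ref{relations} to rewrite the hypothesis ``$q$ is prime'' as ``$q$ is quasi $1$-absorbing'', and then feed this into part~(5) with starting index $n=1$, which promotes it to quasi $m$-absorbing for every $m\geq 1$. The weakly prime case is handled identically: part~(2) identifies weakly prime with weakly quasi $1$-absorbing, and part~(6) with $n=1$ extends this to all $m\geq 1$. If the corollary also records the relation for $n$-absorbing elements, I would chain part~(3) (passing from $n$-absorbing to quasi $n$-absorbing) with part~(5) to obtain quasi $m$-absorbing for all $m\geq n$, and then compose with part~(4) to get the corresponding weakly quasi $m$-absorbing statements.

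The only point that needs care is the bookkeeping of the index ranges: parts~(5) and~(6) enlarge the class only upward from the index at which the chain is entered, so one must begin each chain at the correct value (namely $n=1$ in the prime and weakly prime cases) to guarantee that the conclusion covers the full range $m\geq 1$ rather than some truncated range. I do not expect any genuine obstacle; the statement is a formal consequence of the base equivalences and the upward transitivity packaged in Theorem~\ref{relations}, and the proof should amount to little more than citing the appropriate parts in the right order.
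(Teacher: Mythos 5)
Your handling of parts (1)--(3) of Corollary~\ref{c1} is correct and is exactly the paper's argument: identify prime (resp.\ weakly prime) with quasi (resp.\ weakly quasi) $1$-absorbing via Theorem~\ref{relations}(1),(2) and promote upward via (5),(6); for a $2$-absorbing element, pass to quasi $2$-absorbing via Theorem~\ref{relations}(3) with $n=2$ and promote via (5).

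However, part (4) of the corollary assumes $q$ is \emph{weakly} $2$-absorbing, and there your chain does not reach the hypothesis. You propose to obtain the weakly quasi statements by running the non-weak chain ($n$-absorbing $\Rightarrow$ quasi $n$-absorbing $\Rightarrow$ quasi $m$-absorbing) and then composing with Theorem~\ref{relations}(4) (quasi $\Rightarrow$ weakly quasi). That composition proves: if $q$ is $2$-absorbing, then $q$ is weakly quasi $m$-absorbing for all $m\geq 2$ --- a statement with a strictly stronger hypothesis than the corollary's, since a weakly $2$-absorbing element need not be $2$-absorbing (the element $0$ is always weakly $2$-absorbing vacuously, but need not be $2$-absorbing). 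What is actually needed is the weakly counterpart of Theorem~\ref{relations}(3), namely ``weakly $n$-absorbing $\Rightarrow$ weakly quasi $n$-absorbing.'' This is not literally among the six listed implications, but it follows by the same one-line argument as (3): if $0\neq a^{n}b\leq q$, write this as $0\neq a\cdots ab\leq q$ and apply the weakly $n$-absorbing property. This is precisely what the paper invokes when it says a (weakly) $2$-absorbing element is a (weakly) quasi $2$-absorbing element, before finishing with (6). Your proof should be amended to state and use this weak analogue rather than Theorem~\ref{relations}(4).
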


\begin{enumerate}
\item If $q$ is a prime element of $L$, then $q$ is a quasi $n$-absorbing
element of $L$ for all $n\geq 1.$

\item If $q$ is weakly prime, then $q$ is a weakly quasi $n$-absorbing
element of $L$ for all $n\geq 1.$

\item If $q$ is a 2-absorbing element of $L,$ then $q$ is a quasi $n$%
-absorbing element of $L$ for all $n\geq 2.$

\item If $q$ is weakly 2-absorbing$,$ then $q$ is a weakly quasi $n$%
-absorbing element of $L$ for all $n\geq 2.$
\end{enumerate}

\begin{proof}
(1), (2) Since a (weakly) prime element of $L$ is exactly a (weakly) quasi
1-absorbing element of $L$, this result is clear by \ref{relations} (5), (6).

(3), (4) Since a (weakly) 2-absorbing element of $L$ is a (weakly) quasi
2-absorbing element of $L$, so we are done by again (5), (6).
\end{proof}

However the converses of these relations above are not true in general.

\begin{example}
Consider the lattice of ideals of the ring of integers $L=L(%
\mathbb{Z}
)$. Note that the element $30%
\mathbb{Z}
$ of $L$ is a quasi $2$-absorbing element, and so quasi $n$-absorbing
element for all $n\geq 2$ by Corollary \ref{c1}, but it is not a 2-absorbing
element of $L$ by Theorem 2.6 in \cite{fct}.
\end{example}

\begin{proposition}
Let $q$ be a proper element of $L$. Then the following statements are
equivalent:
\end{proposition}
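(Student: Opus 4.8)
The plan is to prove the listed conditions equivalent by reformulating the defining inequality in terms of residuals (quotients). The observation I would record first is that, since every element satisfies $a\leq 1$ and the product is monotone, we have $a^{n}=a\cdot a^{n-1}\leq a^{n-1}$; consequently $(q:a^{n-1})\leq (q:a^{n})$ holds for \emph{every} $a$, with no hypothesis on $q$ at all. Thus the real content of the quasi $n$-absorbing condition is exactly the reverse inequality $(q:a^{n})\leq (q:a^{n-1})$ for those $a$ with $a^{n}\not\leq q$. I would also recall the $C$-lattice formula $(q:x)=\vee\{c\in L_{\ast}:cx\leq q\}$, which is what lets me pass between statements quantified over all compact $b$ and statements about the residual element itself.

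For the implication from the definition to the residual characterization, I would fix $a\in L_{\ast}$ with $a^{n}\not\leq q$ and take an arbitrary compact $c\leq (q:a^{n})$, so that $a^{n}c\leq q$. Since $q$ is quasi $n$-absorbing and $a^{n}\not\leq q$, the definition forces $a^{n-1}c\leq q$, that is, $c\leq (q:a^{n-1})$. Taking the join over all such compact $c$ and using the $C$-lattice representation of the residual yields $(q:a^{n})\leq (q:a^{n-1})$; combined with the automatic reverse inequality above, this gives the desired equality.

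For the converse direction I would start from compact $a,b$ with $a^{n}b\leq q$. If $a^{n}\leq q$ there is nothing to prove, so I may assume $a^{n}\not\leq q$, in which case the residual equality $(q:a^{n})=(q:a^{n-1})$ applies. From $a^{n}b\leq q$ I obtain $b\leq (q:a^{n})=(q:a^{n-1})$, hence $a^{n-1}b\leq q$, which is precisely the quasi $n$-absorbing conclusion; this closes the equivalence.

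The step I expect to be the main obstacle is the bookkeeping forced by the $C$-lattice setting: the definition is phrased only for compact $a,b$, whereas $(q:x)$ is in general an arbitrary element, so the forward direction must route through the join representation $(q:x)=\vee\{c\in L_{\ast}:cx\leq q\}$ and invoke complete join distributivity in order to descend to compact witnesses. If the proposition lists a further equivalent condition, I would expect it to be a mild repackaging of this residual equality (for instance requiring it for every $a$ rather than only those with $a^{n}\not\leq q$, the excluded case $a^{n}\leq q$ being trivially covered), and I would dispatch it by the same residual translation, arranging the proof as a short cycle of implications rather than establishing every pair separately.
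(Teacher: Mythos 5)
Your proof is correct and takes essentially the same approach as the paper: the paper's entire proof is ``It is clear from Definition 1,'' and your residual translation --- including routing the forward direction through the join representation $(q:x)=\vee \{c\in L_{\ast }:cx\leq q\}$ to descend to compact witnesses --- is exactly the careful unfolding of that definition. One small caution: your parenthetical guess that the equality could equivalently be demanded for \emph{every} $a\in L_{\ast }$ is not right (if $a^{n}\leq q$ then $(q:a^{n})=1$ while $(q:a^{n-1})$ need not be $1$), but the proposition's actual extra clause (the case $q=0$) keeps the disjunction ``$a^{n}=0$ or $ann(a^{n})=ann(a^{n-1})$,'' so it is the direct specialization that your main argument already covers.
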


\begin{enumerate}
\item $q$ is a quasi $n$-absorbing element of $L$.

\item $(q:a^{n})=(q:a^{n-1})$ where $a\in L_{\ast },$ $a^{n}\nleq q$.
\end{enumerate}

In paticular $0$ is a quasi $n$-absorbing element of $L$ if and only if$\ $%
for each $a\in L_{\ast }$, $a^{n}=0$ or $ann(a^{n})=ann(a^{n-1}).$

\begin{proof}
It is clear from Definition \ref{d1}.
\end{proof}

Notice that if $q$ is a weakly quasi $n$-absorbing element which is not
quasi $n$-absorbing, then there are some elements $a,$ $b\in L_{\ast }$ such
that $a^{n}b=0$, $a^{n}\nleq q$ and $a^{n-1}b\nleq q$. We call the elements $%
(a,b)$ with this property as a quasi $n$-zero element of $q$. Notice that a
zero divisor element of $L$ is a quasi $1$-zero element of $0_{L},$ and $%
(a,a,b)$ is a triple zero element of $q$ if and only if $(a,b)$ is a quasi
2-zero element of $q.$

\begin{theorem}
Let $q$ be a weakly quasi $n$-absorbing element of $L$. If $(a,b)$ is a
quasi $n$-zero element of $q$ for some $a,b\in L_{\ast },$ then $a^{n}\in
ann(q)$ and $b^{n}\in ann(q).$
\end{theorem}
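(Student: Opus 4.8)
The plan is to prove the two statements $a^{n}\in ann(q)$ and $b^{n}\in ann(q)$ separately, reading $x^{n}\in ann(q)$ as $x^{n}q=0$ (that is, $x^{n}\le (0:q)$), and to work throughout with compact witnesses so that complete join distributivity of the product can be used. First I would record two elementary consequences of the hypothesis: since $a^{n-1}b\not\le q$ forces $b\not\le q$ (otherwise $a^{n-1}b\le b\le q$) and $a^{n}\not\le q$ forces $a\not\le q$, both $a$ and $b$ escape $q$; and since $a^{n}b=0$, multiplying by $b^{n-1}$ gives $a^{n}b^{n}=0$, a relation that will carry the asymmetric second half.

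For $a^{n}q=0$ I would argue exactly as in the proof of the preceding theorem. Suppose $a^{n}q\neq 0$. Since $q$ is the join of the compact elements below it and the product is completely join distributive, there is a compact $c\le q$ with $a^{n}c\neq 0$. Then $a^{n}(b\vee c)=a^{n}b\vee a^{n}c=a^{n}c$, so $0\neq a^{n}(b\vee c)\le c\le q$. Applying the weakly quasi $n$-absorbing hypothesis to the compact element $b\vee c$ gives $a^{n}\le q$ or $a^{n-1}(b\vee c)\le q$; the first contradicts $a^{n}\not\le q$, and the second yields $a^{n-1}b\le a^{n-1}(b\vee c)\le q$, contradicting $a^{n-1}b\not\le q$. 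Hence $a^{n}q=0$.

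For $b^{n}q=0$ I would again assume the contrary and pick a compact $c\le q$ with $b^{n}c\neq 0$. The natural device is the relation $a^{n}b^{n}=0$: taking second factor $a^{n}\vee c$ (compact, and $\not\le q$ because $a^{n}\not\le q$) gives $b^{n}(a^{n}\vee c)=b^{n}a^{n}\vee b^{n}c=b^{n}c$, so $0\neq b^{n}(a^{n}\vee c)\le q$, and the hypothesis with base $b$ yields $b^{n}\le q$ or $b^{n-1}(a^{n}\vee c)\le q$. This is exactly where the genuine obstacle lies: the roles of $a$ and $b$ are not symmetric, since $b$ occurs only to the first power in $a^{n}b\le q$, so neither alternative is immediately absurd. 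In particular $b^{n}\le q$ is not excluded by the quasi $n$-zero data, and $b^{n-1}(a^{n}\vee c)=a^{n}b^{n-1}\vee b^{n-1}c$ can land in $q$ trivially (for $n=2$ the term $a^{n}b^{n-1}=a^{2}b$ is already $0$).

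Ruling out these two alternatives is therefore the real content. I expect to do this with a second application engineered so that one of the non-containments $a^{n}\not\le q$ or $a^{n-1}b\not\le q$ is forced to fail: feed the hypothesis the base $a\vee b$ together with a factor built from $b$ and the witness $c$, say $b\vee c$. Then $(a\vee b)^{n}\ge a^{n}\not\le q$ kills the first alternative and $(a\vee b)^{n-1}(b\vee c)\ge a^{n-1}b\not\le q$ kills the second, provided one has first checked that $(a\vee b)^{n}(b\vee c)$ is nonzero and lies in $q$. Verifying that containment is the crux: it reduces to controlling the mixed powers $a^{i}b^{\,n-i+1}$, which is immediate in the case $b^{n}\le q$ left open by the first application, but in general appears to need an induction on the least power of $b$ lying in $q$, together with the already established identities $a^{n}q=0$ and $a^{n}b^{n}=0$. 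I expect this containment bookkeeping, rather than any single clever choice of product, to be the hardest step.
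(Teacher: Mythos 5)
Your first half is correct, and it is exactly the paper's own proof (the paper's $q_{1}$ is your $c$). Your diagnosis of the second half is also correct, and it is in fact an objection to the paper itself: the paper's entire proof of $b^{n}\in ann(q)$ consists of the word ``Similarly,'' and, as you observe, there is no symmetry to invoke --- the quasi $n$-zero data $a^{n}b=0$, $a^{n}\nleq q$, $a^{n-1}b\nleq q$ treats $a$ and $b$ completely differently, and the mirrored application (base $b$, factor $a^{n}\vee c$) yields the alternatives $b^{n}\leq q$ or $b^{n-1}(a^{n}\vee c)\leq q$, neither of which is excluded by anything. Where your proposal falls short is that it never closes this gap, and the gap cannot be closed. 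Worse, the containment $(a\vee b)^{n}(b\vee c)\leq q$ on which your repair rests is provably unavailable: for $n=2$, the same device you used in the first half shows first that $abq=0$ (apply the hypothesis to $(a\vee c)^{2}b=abc\vee bc^{2}$ for compact $c\leq q$), and then that $b^{2}q\neq 0$ forces successively $b^{2}\nleq q$ (your own move, which works precisely in the case $b^{2}\leq q$ and so eliminates it), $b^{3}\nleq q$ (base $b$, factor $b\vee c$), and $ab^{2}\nleq q$ (base $b\vee c$, factor $a$ if $ab^{2}\neq 0$; base $b$, factor $a\vee c$ if $ab^{2}=0$). So whenever there is something left to refute, the mixed powers $ab^{2}$, $b^{3}$ are \emph{not} below $q$, and your product $(a\vee b)^{2}(b\vee c)$ is never below $q$; your planned induction also has no base case, since no power of $b$ need ever lie below $q$.

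In fact the second assertion of the theorem is false, so no bookkeeping can save it. Take $n=2$, let $S=k[x,y,z]$, $I=(x^{2}y,\,x^{2}z,\,xyz,\,xz^{2},\,yz^{2},\,z^{3})$, and let $L$ be the lattice of monomial ideals of $S/I$; this is a multiplicative $C$-lattice under the ideal product, whose compact elements are the finitely generated monomial ideals. Put $q=(z)$, $a=(x)$, $b=(y)$. Then $a^{2}b=(x^{2}y)=0$, $a^{2}=(x^{2})\nleq q$, $ab=(xy)\nleq q$, so $(a,b)$ is a quasi $2$-zero element of $q$, and $a^{2}q=(x^{2}z)=0$ as the (valid) first half predicts, but $b^{2}q=(y^{2}z)\neq 0$ since $y^{2}z\notin I$. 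It remains to check that $q$ is weakly quasi $2$-absorbing. Let $C,D$ be proper finitely generated monomial ideals with $0\neq C^{2}D\leq q$; lifting to $S$ and writing $J=(z)+I=(z,x^{2}y)$, this says $C^{2}D\subseteq J$ but $C^{2}D\not\subseteq I$. The monomials of $J$ not in $I$ are $z$, $xz$, $z^{2}$ and the $y^{k}z$, so some product $cc'd$ of monomial generators, having degree at least $3$, equals some $y^{k}z$ with $k\geq 2$; since only one factor can carry the $z$, some generator of $C$ is a pure power $y^{e}$. Now for arbitrary generators $c\in C$, $d\in D$, the membership $y^{e}cd\in J$ gives $z\mid cd$ or $x^{2}\mid cd$; in the first case $cd\in J$; in the second, either $y\mid cd$, whence $x^{2}y\mid cd$ and $cd\in J$, or else $c$ and $d$ are pure powers of $x$, and then $c^{2}d$ is a pure power of $x$, which is not in $J$, contradicting $C^{2}D\subseteq J$. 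Hence $CD\leq q$ always, and $q$ is weakly quasi $2$-absorbing. So the hypotheses of the theorem hold while the conclusion $b^{n}\in ann(q)$ fails: the theorem itself is wrong, the paper's ``Similarly'' is exactly the error, and your instinct that the asymmetric second half carries the real content was sound --- there is no proof to be found, only a counterexample.
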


\begin{proof}
Suppose that $a^{n}\notin ann(q).$ Hence $a^{n}q_{1}\neq 0$ for some$\
q_{1}\in L_{\ast }$ where $q_{1}\leq q.$ It follows $0\neq a^{n}(b\vee
q_{1})\leq q$. Since $a^{n}\nleq q$, and $q$ is weakly quasi $n$-absorbing,
we conclude that $a^{n-1}(b\vee q_{1})\leq q.$ So $a^{n-1}b\leq q,$ a
contradiction. Thus $a^{n}q=0$, and so $a^{n}\in ann(q)$. Similarly we
conclude that $b^{n}\in ann(q)$.
\end{proof}

\begin{theorem}
\label{pintersect}
\end{theorem}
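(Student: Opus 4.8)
The plan is to establish the theorem in the form: a finite meet $q = p_1 \wedge p_2 \wedge \cdots \wedge p_k$ of prime elements of $L$ is a quasi $2$-absorbing element of $L$, from which Theorem \ref{relations}(5) immediately yields that $q$ is quasi $m$-absorbing for every $m \geq 2$. This is exactly the phenomenon illustrated by the preceding example, in which $30\mathbb{Z} = 2\mathbb{Z} \wedge 3\mathbb{Z} \wedge 5\mathbb{Z}$ is quasi $n$-absorbing for all $n \geq 2$ while failing to be $2$-absorbing. First I would observe that $q$ is proper, since $q \leq p_1 < 1$, so that the definition applies.

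The core of the argument is a factorwise application of primeness. Fix $a, b \in L_{\ast}$ with $a^2 b \leq q$, so that $a^2 b \leq p_i$ for each $i$. Writing $a^2 b = a \cdot (ab)$ and invoking the primeness of $p_i$, I obtain for each $i$ the dichotomy $a \leq p_i$ or $ab \leq p_i$. The decisive observation is that \emph{both} alternatives force $ab \leq p_i$: in the first case $ab \leq a \cdot 1 = a \leq p_i$ by monotonicity of the product, while in the second case $ab \leq p_i$ is immediate. Hence $ab \leq p_i$ holds uniformly in $i$, and taking the meet gives $ab \leq \bigwedge_i p_i = q$. Thus the second alternative $ab \leq q$ of the quasi $2$-absorbing definition is satisfied (the alternative $a^2 \leq q$ is never even needed), which completes the proof that $q$ is quasi $2$-absorbing.

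The step I expect to be the conceptual crux---rather than a computational hurdle---is seeing why a meet of arbitrarily many primes is only quasi $2$-absorbing and in general not genuinely $2$-absorbing. For a genuine $2$-absorbing conclusion one would begin from $abc \leq p_i$ and obtain one of $ab, ac, bc \leq p_i$, but the surviving pair may vary with $i$, so no single pair need lie below the whole meet $q$. The quasi notion sidesteps this obstruction precisely because only powers of the single element $a$ occur: the prime dichotomy applied to $a \cdot (a^{n-1}b)$ collapses both horns onto the \emph{same} inequality $a^{n-1}b \leq p_i$, which is uniform in $i$ and therefore passes to the meet. I would foreground this collapse in the write-up, since it is the structural reason the result holds in such generality; I would also remark that the identical argument, using $a^{n-1}b \leq a^{n-1} \leq a \leq p_i$ in the branch $a \leq p_i$, gives the quasi $n$-absorbing conclusion directly for every $n \geq 2$, and indeed for an arbitrary meet of prime elements.
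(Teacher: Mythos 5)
Your argument is correct and is essentially the paper's own proof of part (1): the paper likewise reduces to the quasi $2$-absorbing case and then invokes the ``quasi $n$ implies quasi $m$ for $m\geq n$'' result (cited there via Corollary \ref{c1}(3) rather than Theorem \ref{relations}(5)), applies primeness of each $p_{i}$ to $a^{2}b\leq p_{i}$, observes that both horns of the resulting dichotomy force $ab\leq p_{i}$, and passes to the meet; your closing remark that finiteness is never used is apt, since the theorem concerns an arbitrary family $\{p_{\lambda }\}_{\lambda \in \Lambda }$. The one omission is part (2) of the theorem (meets of \emph{weakly} prime elements are weakly quasi $m$-absorbing): you should add that the identical argument runs under the hypothesis $0\neq a^{2}b\leq \bigwedge_{\lambda \in \Lambda }p_{\lambda }$, where weak primeness of each $p_{\lambda }$ applies to $a\cdot (ab)$ because $a^{2}b\neq 0$ --- which is exactly how the paper dispatches that case, as ``the similar argument in (1).''
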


\begin{enumerate}
\item Let $\{p_{\lambda }\}_{\lambda \in \Lambda }$ be a family of prime
elements of $L,$ then $\underset{\lambda \in \Lambda }{\overset{}{\tbigwedge 
}}p_{\lambda }$ is a quasi $m$-absorbing element for all $m\geq 2.$

\item Let $\{p_{\lambda }\}_{\lambda \in \Lambda }$ be a family of weakly
prime elements of $L,$ then $\underset{\lambda \in \Lambda }{\overset{}{%
\tbigwedge }}p_{\lambda }$ is a weakly quasi $m$-absorbing element for all $%
m\geq 2.$
\end{enumerate}

\begin{proof}
(1) To prove this argument above, we need to verify that $\underset{\lambda
\in \Lambda }{\overset{}{\tbigwedge }}p_{\lambda }$ is a quasi 2-absorbing
element of $L$ by Corollary \ref{c1} (3). Let $a,b\in L_{\ast }$ with $%
a^{2}b\leq \underset{\lambda \in \Lambda }{\overset{}{\tbigwedge }}%
p_{\lambda }.$ Since $a^{2}b\leq p_{i}$ for all $p_{i}$ prime elements, we
have $a\leq p_{i}$ or $b\leq p_{i}.$ Thus $ab\leq p_{i}$ for all $i=1,...,n$
and so $ab\leq $ $\underset{\lambda \in \Lambda }{\overset{}{\tbigwedge }}%
p_{\lambda }$, which completes the proof.

(2) It can be easily obtained that $\underset{\lambda \in \Lambda }{\overset{%
}{\tbigwedge }}p_{\lambda }$ is a weakly quasi 2-absorbing element by the
similar argument in (1). Consequently it is a weakly quasi $m$-absorbing
element$\ $or all $m\geq 2$ by Corollary \ref{c1} (4).
\end{proof}

\begin{corollary}
Let $q$ be a proper element of $L.$ Then $\sqrt{q},$ $Nil(L)$ and $Jac(L)$
are quasi $n$-absorbing elements of $L$ for all $n\geq 2.$
\end{corollary}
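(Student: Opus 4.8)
The plan is to realize each of the three elements $\sqrt{q}$, $Nil(L)$ and $Jac(L)$ as a meet of a family of prime elements, and then to quote Theorem~\ref{pintersect}(1), which already shows that any such meet is a quasi $n$-absorbing element for every $n\geq 2$. So the whole corollary should reduce to exhibiting the correct family of primes in each of the three cases.

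For $\sqrt{q}$ this is immediate from the definition, since $\sqrt{q}=\bigwedge\{p\in L:p\text{ prime},\ q\leq p\}$ is literally a meet of primes. The only points I would check are that this family is nonempty and that $\sqrt{q}$ is a proper element, so that the notion of quasi $n$-absorbing even applies: because $q<1$ and $1$ is compact, a standard Zorn's lemma argument (the join of a chain of proper elements lying above $q$ again stays proper, by compactness of $1$) produces a maximal element $m$ with $q\leq m$, and such an $m$ is prime, whence $\sqrt{q}\leq m<1$. Applying Theorem~\ref{pintersect}(1) to this family then finishes the case. The element $Nil(L)=\sqrt{0}$ is exactly the special case $q=0$ of the previous argument, so nothing further is needed for it.

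For $Jac(L)=\bigwedge\{m:m\text{ maximal}\}$ I would first reduce to the same situation by showing that every maximal element is prime; this is the one ingredient not already contained in Theorem~\ref{pintersect}. The argument is short: if $m$ is maximal and $ab\leq m$ while $a\nleq m$ and $b\nleq m$, then maximality gives $m\vee a=m\vee b=1$, and complete join distributivity of the product yields $1=(m\vee a)(m\vee b)=m^{2}\vee mb\vee am\vee ab\leq m$ (using $m^{2}\leq m\cdot 1=m$, $am,mb\leq 1\cdot m=m$, and $ab\leq m$), which contradicts $m<1$; hence $a\leq m$ or $b\leq m$. With maximal elements now known to be prime, $Jac(L)$ is again a meet of prime elements, and Theorem~\ref{pintersect}(1) applies verbatim.

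I expect the main (and essentially only) obstacle to be this maximal-implies-prime step, together with the parallel bookkeeping that each radical is a proper element, since these are the parts not handed to us directly by Theorem~\ref{pintersect}. Once they are in place, the corollary is a direct citation of Theorem~\ref{pintersect}(1) in each of the three cases.
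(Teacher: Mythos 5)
Your proposal is correct and follows exactly the paper's route: the paper's entire proof is a citation of Theorem~\ref{pintersect}(1), implicitly using that $\sqrt{q}$ and $Nil(L)=\sqrt{0}$ are by definition meets of primes and that $Jac(L)$ is a meet of maximal (hence prime) elements. You simply make explicit the bookkeeping the paper suppresses --- the maximal-implies-prime argument via join distributivity and the Zorn/compactness check that these meets are proper --- and both of those details are handled correctly.
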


\begin{proof}
It is clear from Theorem \ref{pintersect}.
\end{proof}

\begin{theorem}
Let $L$ be a totally ordered lattice and $m$ a positive integer.
\end{theorem}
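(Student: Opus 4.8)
The stated hypotheses only fix the setting, so I read the intended conclusion as the totally-ordered refinement of Theorem~\ref{relations}: over a chain the quasi and the genuine absorbing notions should coincide, i.e.\ a proper $q$ is quasi $m$-absorbing if and only if it is $m$-absorbing, and weakly quasi $m$-absorbing if and only if weakly $m$-absorbing. One implication is free and order-free: by Theorem~\ref{relations}(3)--(4) every $m$-absorbing (resp.\ weakly $m$-absorbing) element is already quasi $m$-absorbing (resp.\ weakly quasi $m$-absorbing). So the whole content lies in the converse, where the total ordering does the work.

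For the converse my plan is to argue by contraposition. Suppose $q$ is not $m$-absorbing; then there are compact $a_1,\dots,a_{m+1}$ with $a_1\cdots a_{m+1}\le q$ while no $m$ of them multiply into $q$. Since $L$ is a chain I relabel so that $a_1\le\cdots\le a_{m+1}$, and I first record two order facts: dropping the largest factor gives the smallest $m$-fold subproduct, so the failure is concentrated in the single relation $a_1\cdots a_m\not\le q$; and, in a totally ordered lattice, $\sqrt q$ is prime, because if $xy\le\sqrt q$ with $x\le y$ then comparability forces $x\le p$ for every prime $p\ge q$, whence $x\le\sqrt q$. I would then promote the product into the quasi $m$-absorbing hypothesis with the \emph{largest} element repeated: from $a_{m+1}\,(a_1\cdots a_m)\le q$ I get $a_{m+1}^{\,m}(a_1\cdots a_m)=a_{m+1}^{\,m-1}\bigl(a_{m+1}(a_1\cdots a_m)\bigr)\le q$, so quasi $m$-absorbing yields $a_{m+1}^{\,m}\le q$ or $a_{m+1}^{\,m-1}(a_1\cdots a_m)\le q$. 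The first branch is immediately fatal, since $a_1\cdots a_m\le a_{m+1}^{\,m}\le q$ contradicts the choice of the $a_i$; the weak versions run verbatim once $0\ne a_1\cdots a_{m+1}$ is carried along and the auxiliary products are checked to be nonzero.

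The genuine obstacle is the surviving branch $a_{m+1}^{\,m-1}(a_1\cdots a_m)\le q$. Because enlarging or adjoining factors only shrinks a product, this inequality lies below the subproducts I am trying to reach and cannot be exploited directly. I expect to remove it by descending on the exponent of $a_{m+1}$: repeatedly passing to the residual $(q:a_{m+1})$ --- available because $L$ is a $C$-lattice, so that residuals of compact elements remain comparable to the $a_i$ --- and re-inserting the result into the quasi $m$-absorbing condition, stripping one power of $a_{m+1}$ at each stage until a true $m$-fold subproduct is forced below $q$, contradicting the assumption. I would test this first at $m=2$, where the leftover branch is precisely the triple-zero situation already analysed for $2$-absorbing elements, and then transport the same bookkeeping to general $m$. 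Making the descent terminate at the \emph{correct} non-containment --- rather than collapsing to the uselessly small element $a_1^{\,m}$ --- is the one delicate point, and is where I expect to spend the real effort.
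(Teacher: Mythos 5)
Your proposal attacks the wrong statement. In this paper the theorem environments carry only the hypotheses, and the actual assertions are the enumerated items immediately following; here they read: (1) if $\{q_{\lambda }\}_{\lambda \in \Lambda }$ is a family of quasi $m$-absorbing elements of $L$, then $\bigwedge_{\lambda \in \Lambda }q_{\lambda }$ is a quasi $m$-absorbing element of $L$, and (2) the same with ``weakly quasi'' throughout. So the total-ordering hypothesis is not there to collapse quasi $m$-absorbing into $m$-absorbing; it is there to make the family $\{q_{\lambda }\}$ itself a chain, so that meets of such families remain quasi $m$-absorbing. The paper's proof is a short contrapositive: if $a^{m}\nleq \bigwedge q_{\lambda }$ and $a^{m-1}b\nleq \bigwedge q_{\lambda }$, pick indices $j,k$ with $a^{m}\nleq q_{j}$ and $a^{m-1}b\nleq q_{k}$; since $L$ is totally ordered, $q_{j}$ and $q_{k}$ are comparable, and for the smaller of the two, say $q_{t}$, both non-containments hold, whence $a^{m}b\nleq q_{t}\geq \bigwedge q_{\lambda }$. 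Nothing in your text addresses this.

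Independently of the misreading, the argument you do sketch is not a proof even of your own conjectured statement. You isolate the branch $a_{m+1}^{\,m-1}(a_{1}\cdots a_{m})\leq q$ as ``the genuine obstacle'' and then only describe a hoped-for descent through residuals $(q:a_{m+1})$, explicitly deferring the delicate termination step; that is a plan, not an argument. (There is also a hypothesis slip: you invoke $C$-lattice machinery --- residuals, primality of $\sqrt{q}$ via compact elements --- which the statement you chose to prove never grants you; the paper's theorem assumes only a totally ordered multiplicative lattice.) The fix is to prove the meet-closure statements above; once you use comparability of $q_{j}$ and $q_{k}$, the whole thing is three lines, and the weakly quasi case is verbatim the same with $0\neq a^{m}b$ carried along.
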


\begin{enumerate}
\item If $\{q_{\lambda }\}_{\lambda \in \Lambda }$ is a family of quasi $m$%
-absorbing elements of $L,$ then $\underset{\lambda \in \Lambda }{\overset{}{%
\tbigwedge }}q_{\lambda }$ is a quasi $m$-absorbing element of $L$.

\item If $\{q_{\lambda }\}_{\lambda \in \Lambda }$ is a family of weakly
quasi $m$-absorbing elements of $L,$ then $\underset{\lambda \in \Lambda }{%
\overset{}{\tbigwedge }}q_{\lambda }$ is a weakly quasi $m$-absorbing
element of $L.$
\end{enumerate}

\begin{proof}
(1) Assume that $\{q_{\lambda }\}_{\lambda \in \Lambda }$ is an ascending
chain and $a^{m}\nleq \underset{\lambda \in \Lambda }{\overset{}{\tbigwedge }%
}q_{\lambda }$and $a^{m-1}b\nleq \underset{\lambda \in \Lambda }{\overset{}{%
\tbigwedge }}q_{\lambda }$. We show that $a^{m}b\nleq \underset{\lambda \in
\Lambda }{\overset{}{\tbigwedge }}q_{\lambda }$. Hence $a^{m}\nleq q_{j}$
and $a^{m-1}b\nleq q_{k}$ for some $j,k=1,...,n.$ Put $t=\min \{j,k\}$. Then 
$a^{m}\nleq q_{t}$ and $a^{m-1}b\nleq q_{t}$. Since $q_{t}$ is a quasi $m$%
-absorbing element, it follows $a^{m}b\nleq q_{t}$. Thus $a^{m}b\nleq 
\underset{\lambda \in \Lambda }{\overset{}{\tbigwedge }}q_{\lambda }$, we
are done.

(2) It can be easily shown similar to (1).
\end{proof}

\begin{enumerate}
\item If $q_{1},$...$,q_{n}$ are quasi $m_{i}$-absorbing elements of $L$ for
all $i=1,2,...,n$, then $\underset{i=1}{\overset{n}{\tbigwedge }}q_{i}$ is a
quasi $m$-absorbing element of $L$ where $m=\max \{m_{1},...,m_{n}\}+1.$

\item If $q_{1},$...$,q_{n}$ are weakly quasi $m_{i}$-absorbing elements of $%
L$ for all $i=1,2,...,n,$ then $\underset{i=1}{\overset{n}{\tbigwedge }}%
q_{i} $ is a weakly quasi $m$-absorbing element of $L$ where $m=\max
\{m_{1},...,m_{n}\}+1.$
\end{enumerate}

\begin{proof}
(1) Let $a,b\in L_{\ast }$ such that $a^{m}b\leq $ $\underset{i=1}{\overset{n%
}{\tbigwedge }}q_{i}.$ Hence $a^{m_{i}}\leq q_{i}$ or $a^{m_{i}-1}b\leq
q_{i} $ for all $i=1,..,n.$ Now assume that $a^{m}\not\leq \underset{i=1}{%
\overset{n}{\tbigwedge }}q_{i}.$ Without loss generality we can suppose that 
$a^{m_{i}}\leq q_{i}$ for all $1\leq i\leq j,$ and $a^{m_{i}}\not\leq q_{i}$
for all $j+1\leq i\leq n.$ Hence we have $a^{m_{i}-1}b\leq q_{i}$ for all $%
j+1\leq i\leq n.$ Then we get clearly $a^{m-1}b\leq q_{i}$ for $m=\max
\{m_{1},...,m_{n}\}+1$ and for all $1\leq i\leq n.$ Thus $a^{m-1}b\leq 
\underset{i=1}{\overset{n}{\tbigwedge }}q_{i}$, so we are done.

(2) The proof is obtained similar to (1).
\end{proof}

If $x\in L,$ the interval $[x,1]$ is denoted $L/x.$ The elemets of $%
\overline{a}$ and $L/x$ is again a multiplicative lattice with $\overline{a}%
\circ \overline{b}=ab\vee x$ for all $\overline{a},\overline{b}\in L/x.$

\begin{theorem}
Let $x$ and $q$ be proper elements of $L$ with $x\leq q.$
\end{theorem}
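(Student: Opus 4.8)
The plan is to translate every statement about $\overline{q}$ in the quotient lattice $L/x$ into an equivalent statement about $q$ in $L$, using the explicit product $\overline{a}\circ\overline{b}=ab\vee x$ together with the hypothesis $x\leq q$. The whole argument rests on being able to strip off the $\vee x$ that the quotient product introduces, and this is exactly what $x\leq q$ buys us.

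First I would record the power formula in $L/x$. By induction on $k$, using $\overline{a}\circ\overline{a}^{\,k-1}=a\cdot a^{k-1}\vee x$ and the observation that $xa^{k-1}\leq x$, one obtains $\overline{a}^{\,k}=a^{k}\vee x$ for every $k\geq 1$. Since the order on $L/x=[x,1]$ is just the restriction of the order on $L$, and since $x\leq q$, the key reduction is
\[
\overline{a}^{\,n}\circ\overline{b}\leq\overline{q}\quad\Longleftrightarrow\quad a^{n}b\vee x\leq q\quad\Longleftrightarrow\quad a^{n}b\leq q ,
\]
and likewise $\overline{a}^{\,n}\leq\overline{q}\iff a^{n}\leq q$ and $\overline{a}^{\,n-1}\circ\overline{b}\leq\overline{q}\iff a^{n-1}b\leq q$. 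With these equivalences the non-weak correspondence is essentially immediate: if $q$ is quasi $n$-absorbing in $L$, then from $\overline{a}^{\,n}\circ\overline{b}\leq\overline{q}$ we get $a^{n}b\leq q$, hence $a^{n}\leq q$ or $a^{n-1}b\leq q$, which translates back to $\overline{a}^{\,n}\leq\overline{q}$ or $\overline{a}^{\,n-1}\circ\overline{b}\leq\overline{q}$; the converse direction is symmetric.

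For the weak version the subtlety is that the zero of $L/x$ is $x$, not $0$, so the hypothesis $0_{L/x}\neq\overline{a}^{\,n}\circ\overline{b}$ means precisely $a^{n}b\not\leq x$. Here I would split into two cases. If $a^{n}b\leq x$ and $x$ is assumed quasi $n$-absorbing in $L$, then $a^{n}\leq x\leq q$ or $a^{n-1}b\leq x\leq q$, and we are done directly. If instead $a^{n}b\not\leq x$, then $\overline{a}^{\,n}\circ\overline{b}$ is a nonzero element of $L/x$ lying below $\overline{q}$, so the weakly quasi $n$-absorbing hypothesis on $\overline{q}$ yields $\overline{a}^{\,n}\leq\overline{q}$ or $\overline{a}^{\,n-1}\circ\overline{b}\leq\overline{q}$, which translates back as above. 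Combining the two cases gives the desired conclusion for $q$ in $L$.

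The one point requiring genuine care, and what I expect to be the main obstacle, is the compatibility of compactness: the definition of quasi $n$-absorbing quantifies over $L_{\ast}$, so I must match the compact elements of $L/x$ with those of $L$. Concretely, I would argue that $c\mapsto\overline{c}=c\vee x$ carries compact elements of $L$ to compact elements of $L/x$, and that every compact element of $L/x$ arises in this way, so that the quantifiers ``for all $a,b\in L_{\ast}$'' in $L$ and ``for all $\overline{a},\overline{b}\in (L/x)_{\ast}$'' in $L/x$ are genuinely interchangeable. Once this correspondence of compact elements is secured, every remaining step is a routine rewriting through $\overline{a}^{\,k}=a^{k}\vee x$ and the collapse of the superfluous $\vee x$ afforded by $x\leq q$.
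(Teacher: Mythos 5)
Your treatment of part (1) is correct and is essentially the paper's own argument: write $\overline{a}=a\vee x$, $\overline{b}=b\vee x$, use $\overline{a}^{\,n}\circ \overline{b}=a^{n}b\vee x$ together with $x\leq q$ to pass between $\overline{a}^{\,n}\circ \overline{b}\leq \overline{q}$ and $a^{n}b\leq q$, and translate the conclusion back. (Your extra converse for this part is true but is not part of the statement; and your concern about matching compact elements of $L/x$ with those of $L$ is a legitimate point the paper silently glosses over, though settling it requires the $C$-lattice fact that every compact element of $[x,1]$ has the form $c\vee x$ with $c\in L_{\ast }$.)

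The weak case is where there is a genuine gap, and it is an error of direction. The theorem claims: if $q$ is weakly quasi $n$-absorbing in $L$, then $\overline{q}$ is weakly quasi $n$-absorbing in $L/x$. What you actually argue is the reverse implication --- from a weak hypothesis on $\overline{q}$ you derive ``the desired conclusion for $q$ in $L$'' --- and to make even that work you must smuggle in the assumption that $x$ is quasi $n$-absorbing in $L$, which is nowhere among the hypotheses ($x$ is merely a proper element below $q$). So part (2) of the statement is never established by your proposal. The direction that is actually claimed needs no case split at all: if $0_{L/x}\neq \overline{a}^{\,n}\circ \overline{b}\leq \overline{q}$, then $a^{n}b\vee x\neq x$, i.e. $a^{n}b\nleq x$; since $0\leq x$ this forces $a^{n}b\neq 0$, hence $0\neq a^{n}b\leq q$ and the weakly quasi $n$-absorbing hypothesis on $q$ applies, after which the translation back to $L/x$ is identical to part (1). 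This one-line observation ($a^{n}b\nleq x$ implies $a^{n}b\neq 0$) is precisely how the paper handles the weak case, and it is the step missing from your write-up.
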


\begin{enumerate}
\item If $q$ is a quasi $n$-absorbing element of $L,$ then $\overline{q}$ is
a quasi $n$-absorbing element of $L/x.$

\item If $q$ is a weakly quasi $n$-absorbing element of $L,$ then $\overline{%
q}$ is a weakly quasi $n$-absorbing element of $L/x.$
\end{enumerate}

\begin{proof}
(1) Suppose that $\overline{a}=a\vee x,$ $\overline{b}=b\vee x\in L$ with $%
\overline{a}^{n}\overline{b}\leq \overline{q}.$ Then $a^{n}b\vee x\leq q$,
and so $a^{n}b\leq q.$ Since $q$ is quasi 2-absorbing, we get either $%
a^{n}\leq q$ or $a^{n-1}b\leq q$. Thus $\overline{a}^{n}=(a\vee x)^{n}\leq 
\overline{q}$ or $\overline{a}^{n-1}\overline{b}=\left( a\vee x\right)
^{n-1}(b\vee x)\leq \overline{q}$, as needed.

(2) Let $\overline{a}=a\vee x,$ $\overline{b}=b\vee x\in L$ with $x\neq 
\overline{a}^{n}\overline{b}\leq \overline{q}.$ Then $x\neq a^{n}b\vee x\leq
q$, and so $0\neq a^{n}b\leq q.$ Since $q$ is weakly quasi 2-absorbing, we
conclude that either $a^{n}\leq q$ or $a^{n-1}b\leq q$. Consequently, $%
\overline{a}^{n}=(a\vee x)^{n}\leq \overline{q}$ or $\overline{a}^{n-1}%
\overline{b}=\left( a\vee x\right) ^{n-1}(b\vee x)\leq \overline{q}$, we are
done.
\end{proof}

Recall that any $C$-lattice can be localized at a multiplicatively closed
set. Let $L$ be a $C$-lattice and $S$ a multiplicatively closed subset of $%
L_{\ast }$.Then for $a\in L,$ $a_{S}=\vee \{x\in L_{\ast }:$ $xs\leq a$ for
some $s\in S\}$ and $L_{S}=\{a_{S}:a\in L\}.$ $L_{S}$ is again a
multiplicative lattice under the same order as $L$ with the product $%
a_{S}\circ b_{s}=(a_{S}b_{S})_{S}$ where the right hand side is evaluated in 
$L$. If $p\in L$ is prime and $S=\{x\in L_{\ast }:x\nleq p\}$, then $L_{S}$
is denoted by $L_{p}.$ \cite{jw}

\begin{theorem}
Let $m$ be a maximal element of $L$ and $q$ be a proper element of $L.$
\end{theorem}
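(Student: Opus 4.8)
The plan is to show that the property descends to the localization $L_m = L_S$, where $S = \{x \in L_{\ast} : x \nleq m\}$, so that part (1) reads ``if $q$ is quasi $n$-absorbing, then $q_m$ is quasi $n$-absorbing in $L_m$'' and part (2) is the weakly analogue. The whole argument rests on three standard facts about $C$-lattice localization from \cite{jw} and \cite{js}: the compact elements of $L_S$ are exactly the $a_S$ with $a \in L_{\ast}$; for such compact $a$ one has $(a_S)^{n} \circ b_S = (a^{n}b)_S$; and, crucially, the two characterizations on compacts, namely $a_S \leq q_S \iff sa \leq q$ for some $s \in S$, and $a_S = 0_S \iff sa = 0$ for some $s \in S$. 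Granting these, it suffices to test the defining inequality on compact generators $a_S, b_S$ of $L_m$.

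For (1), I would start from $(a_S)^{n} b_S \leq q_S$ in $L_m$, rewrite the left side as $(a^{n}b)_S$, and use the first characterization to produce $s \in S$ with $s a^{n} b \leq q$. The key manoeuvre is then to regard this inequality in $L$ as $a^{n}(sb) \leq q$, absorbing the denominator $s$ into the \emph{non}-exponentiated factor; since $sb \in L_{\ast}$ (a finite product of compacts is compact in a $C$-lattice) and $q$ is quasi $n$-absorbing, we obtain either $a^{n} \leq q$ or $a^{n-1}(sb) = s(a^{n-1}b) \leq q$. Translating back through $a_S \leq q_S \iff sa \leq q$ yields $(a_S)^{n} = (a^{n})_S \leq q_S$ in the first case and $(a_S)^{n-1} b_S = (a^{n-1}b)_S \leq q_S$ in the second, which is exactly what quasi $n$-absorbing of $q_m$ demands.

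Part (2) runs along the same lines, the only extra ingredient being the handling of the zero element. Beginning with $0_S \neq (a_S)^{n} b_S \leq q_S$, the inequality again gives some $s \in S$ with $s a^{n} b \leq q$, while $(a^{n}b)_S \neq 0_S$ forces $s a^{n} b \neq 0$ by the second characterization; thus $0 \neq a^{n}(sb) \leq q$, and weak quasi $n$-absorbing of $q$ applies verbatim. I expect this translation of the nonzero hypothesis to be the only genuinely delicate point: one must know that for a compact $c$ the equality $c_S = 0_S$ is equivalent to annihilation $sc = 0$ by a single $s \in S$, which is where compactness of $a^{n}b$ and the multiplicative closure of $S$ are used. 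Apart from this, both parts are formal consequences of the quasi $n$-absorbing definition once the denominator is shifted onto the factor $b$.
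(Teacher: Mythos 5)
Your proof is correct and follows essentially the same route as the paper: pull $a_m^n b_m \leq q_m$ back to $u a^n b \leq q$ for some compact $u \nleq m$, absorb the denominator into the non-exponentiated factor so that $a^n(ub) \leq q$, apply the quasi $n$-absorbing property of $q$, and localize back using $u_m = 1_m$. Your part (2), including the check that $(a^n b)_m \neq 0_m$ forces $u a^n b \neq 0$, merely fills in details the paper dismisses as ``similar to (1)''.
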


\begin{enumerate}
\item If $q$ is a quasi $n$-absorbing element of $L,$ then $q_{m}$ is a
quasi $n$-absorbing element of $L_{m}.$

\item If $q$ is a weakly quasi $n$-absorbing element of $L,$ then $q_{m}$ is
a weakly quasi $n$-absorbing element of $L_{m}.$
\end{enumerate}

\begin{proof}
(1) Let $a,b\in L_{\ast }$ such that $a_{m}^{n}b_{m}\leq q_{m}$. Hence $%
ua^{n}b\leq q$ for some $u\nleq m.$ It implies that $a^{n}\leq q$ or $%
a^{n-1}(ub)\leq q.$ Since $u_{m}=1_{m},$ we get $a_{m}^{n}\leq q_{m}$ or $%
a_{m}^{n-1}b_{m}\leq q_{m}$, we are done.

(2) It can be verified by the similar argument in (1).
\end{proof}

\begin{theorem}
\label{t1}Let$\ L$ be a principal element lattice. Then the following
statements are equivalent:
\end{theorem}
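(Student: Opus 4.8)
Since the theorem asserts the equivalence of several conditions in a \emph{principal element lattice}, the plan is to prove a cycle of implications, and my expectation (in line with the analogous principal-element characterizations of $2$-absorbing elements in \cite{JTY}) is that the substance lies in upgrading the defining condition, stated for compact $a,b\in L_{\ast}$, to the same condition for \emph{all} $a,b\in L$, possibly together with a residual reformulation valid for all elements. The implications that merely restrict an all-element condition back to $L_{\ast}$, or that invoke Theorem \ref{relations} and Corollary \ref{c1}, are formal, so I would clear those away first in order to isolate the one genuine direction. The tools available in a principal element lattice are exactly the ones I intend to lean on: every element is principal, a finite product of (meet or join) principal elements is again principal, and principality supplies the identities $a\wedge be=((a:e)\wedge b)e$ and $(ae\vee b):e=(b:e)\vee a$.

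For the core implication I would first rephrase via the residual characterization established in the Proposition above: $q$ is quasi $n$-absorbing precisely when $(q:a^{n})=(q:a^{n-1})$ for every $a\in L_{\ast}$ with $a^{n}\nleq q$. Since the inclusion $(q:a^{n-1})\leq(q:a^{n})$ is automatic, proving the all-element version reduces to the single inequality $a^{n-1}(q:a^{n})\leq q$ for an arbitrary $a\in L$ with $a^{n}\nleq q$; equivalently, given arbitrary $a,b\in L$ with $a^{n}b\leq q$ and $a^{n}\nleq q$, I must deduce $a^{n-1}b\leq q$. I would attack this by writing $a=\bigvee_{i}a_{i}$ and $b=\bigvee_{j}b_{j}$ as joins of principal elements and using complete join distributivity of the product, so that $a^{n-1}b=\bigvee a_{i_{1}}\cdots a_{i_{n-1}}b_{j}$ and it suffices to force each principal summand under $q$, after which completeness of the lattice rejoins them to give $a^{n-1}b\leq q$.

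The main obstacle — and the precise reason the principal hypothesis is indispensable — is that the exponent does not distribute over a join as a single $n$th power: expanding $a^{n}=\bigvee a_{i_{1}}\cdots a_{i_{n}}$ produces \emph{mixed} products of distinct principal components, whereas the hypothesis only controls pure powers of a single element, namely ``$c^{n}b\leq q\Rightarrow c^{n}\leq q$ or $c^{n-1}b\leq q$.'' I would resolve this by exploiting principality rather than by brute-force bookkeeping: since each $a_{i}$ and each finite product $a_{i_{1}}\cdots a_{i_{k}}$ is principal, I can residuate and cancel these factors through the two principal identities above, collapsing the mixed products against a common principal refinement so that the single-element hypothesis (in its residual form $(q:c^{n})=(q:c^{n-1})$) finally applies. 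I expect essentially all the work to concentrate in this cancellation/refinement step; once it is in place, the inequality $a^{n-1}(q:a^{n})\leq q$ follows for arbitrary $a$, the all-element condition drops out, and the remaining equivalences in the list close the cycle formally.
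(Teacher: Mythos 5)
Your proposal proves the wrong statement. The equivalent conditions of Theorem \ref{t1} (given in the enumeration immediately after the theorem header) are: (1) every proper element of $L$ is a quasi $n$-absorbing element; (2) for every $a,b\in L_{\ast }$ there exist $c,d\in L$ with $a^{n}=ca^{n}b$ or $a^{n-1}b=da^{n}b$; (3) the analogous condition with $(a_{1}\wedge a_{2}\wedge \cdots \wedge a_{n})^{n}\leq ca_{1}a_{2}\cdots a_{n+1}$ or $(a_{1}\wedge \cdots \wedge a_{n})^{n-1}a_{n+1}\leq da_{1}a_{2}\cdots a_{n+1}$. Nothing in the theorem asks to upgrade the compact-element definition to arbitrary $a,b\in L$; that upgrade is the content of the separate notion of \emph{strongly} quasi $n$-absorbing elements (Definition \ref{d2}) and the theorem that follows it, not of Theorem \ref{t1}. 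The paper's actual proof is short and formal: for (1)$\Rightarrow $(2), apply the quasi $n$-absorbing property of the particular proper element $q=a^{n}b$ to the trivial relation $a^{n}b\leq a^{n}b$, obtaining $a^{n}\leq a^{n}b$ or $a^{n-1}b\leq a^{n}b$, and then use meet-principality of $a^{n}b$ (namely $x\leq e$ forces $x=(x:e)e$) to convert these inequalities into the equalities of (2); for (2)$\Rightarrow $(1), if $a^{n}b\leq q$ then $a^{n}=ca^{n}b\leq cq\leq q$ or $a^{n-1}b=da^{n}b\leq dq\leq q$. The equivalence (2)$\Leftrightarrow $(3) is pure substitution: put $a=a_{1}\wedge \cdots \wedge a_{n}$ and $b=a_{n+1}$ in one direction, and take $a_{1}=\cdots =a_{n}=a$, $a_{n+1}=b$ in the other. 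Your cycle-of-implications plan, aimed entirely at a compact-to-arbitrary upgrade for a fixed $q$, establishes none of the implications the theorem requires.

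Moreover, even judged as a proof of the statement you guessed, the argument is incomplete. You correctly identify the obstruction: expanding $a=\bigvee_{i}a_{i}$ makes $a^{n}$ a join of \emph{mixed} products $a_{i_{1}}\cdots a_{i_{n}}$ of distinct principal components, which the hypothesis (controlling only pure powers $c^{n}$) does not reach. But your proposed resolution --- ``collapsing the mixed products against a common principal refinement so that the residual form finally applies'' --- is a placeholder, not an argument: no refinement is constructed, and no concrete use of the identities $a\wedge be=((a:e)\wedge b)e$ and $(ae\vee b):e=(b:e)\vee a$ is exhibited that disposes of even one mixed term. Since you yourself locate ``essentially all the work'' in this step, the hole sits exactly where the substance should be. It is also worth noting that the paper never decomposes elements into joins of principal ones anywhere in this proof; in a principal element lattice the leverage comes from the elementary fact that $x\leq e$ with $e$ principal yields $x=(x:e)e$, which is all that is needed to pass between the lattice-theoretic condition (1) and the divisibility-type conditions (2) and (3).
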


\begin{enumerate}
\item Every proper element of $L$ is a quasi $n$-absorbing element of $L.$

\item For every $a,$ $b\in L_{\ast }$, $a^{n}=ca^{n}b$ or $a^{n-1}b=da^{n}b$
for some $c,$ $d\in L$.

\item For all $a_{1},a_{2},...,a_{n+1}\in L_{\ast },$ $(a_{1}\wedge
a_{2}\wedge ...\wedge a_{n})^{n}\leq ca_{1}a_{2}...a_{n+1}$ or $(a_{1}\wedge
a_{2}\wedge ...\wedge a_{n})^{n-1}a_{n+1}\leq da_{1}a_{2}...a_{n+1}$ for
some $c,$ $d\in L$.
\end{enumerate}

\begin{proof}
(1)$\Leftrightarrow $(2) Suppose that every proper element of $L$ is a quasi 
$n$-absorbing element of $L.$ Hence $a^{n}b\leq (a^{n}b)$ implies that $%
a^{n}\leq (a^{n}b)$ or $a^{n-1}b\leq (a^{n}b)$. Since $L$ is a principal
element lattice, there is some element $c\in L$ with $a^{n}=ca^{n}b$ or
there is some element $d\in L$ with $a^{n-1}b=da^{n}b$. The converse is
clear.

(2)$\Rightarrow $(3) Put $a=a_{1}\wedge a_{2}\wedge ...\wedge a_{n}$ and $%
b=a_{n+1}.$ Hence the result follows from (2).

(3)$\Rightarrow $(2) For all $a,b\in L_{\ast },$ we can write $a^{n}=%
\underset{n\text{ times}}{(\underbrace{a\wedge a\wedge ...\wedge a})}$ $\leq
ca^{n}b$ or $a^{n-1}b=\underset{n-1\text{ times}}{(\underbrace{a\wedge
a\wedge ...\wedge a})b}$ $\leq da^{n}b.$
\end{proof}

\begin{theorem}
Let $L=L_{1}\times L_{2}$ where $L_{1}$ and $L_{2}$ are $C$-lattices. Then
the following statements hold:
\end{theorem}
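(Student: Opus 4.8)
The plan is to exploit the componentwise structure of $L=L_{1}\times L_{2}$ throughout: order, meet and product are all computed coordinatewise, with $0_{L}=(0_{L_{1}},0_{L_{2}})$ and $1_{L}=(1_{L_{1}},1_{L_{2}})$. The first thing I would pin down is the description of the compact elements of $L$. Since the bottom element is vacuously compact in any $C$-lattice and joins in $L$ are taken coordinatewise, a short argument (cover one coordinate while freezing the other) shows that $(a_{1},a_{2})\in L$ is compact exactly when $a_{1}\in (L_{1})_{\ast }$ and $a_{2}\in (L_{2})_{\ast }$. This is precisely what lets me translate the (weakly) quasi $n$-absorbing condition tested on compact $a,b\in L_{\ast }$ into conditions tested on compact elements of the individual factors.

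For any statement about an element that is improper in one coordinate, say $q=q_{1}\times 1_{L_{2}}$ with $q_{1}$ proper in $L_{1}$, the core observation is that for compact $a=(a_{1},a_{2})$ and $b=(b_{1},b_{2})$ the inequality $a^{n}b\leq q$ collapses to its first coordinate, because the second-coordinate inequality $a_{2}^{n}b_{2}\leq 1_{L_{2}}$ is automatic; hence $a^{n}b\leq q$ iff $a_{1}^{n}b_{1}\leq q_{1}$, and likewise $a^{n}\leq q$ iff $a_{1}^{n}\leq q_{1}$ and $a^{n-1}b\leq q$ iff $a_{1}^{n-1}b_{1}\leq q_{1}$. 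From here both implications are immediate. For the forward direction I assume $q_{1}$ is quasi $n$-absorbing in $L_{1}$, take compact $a,b$ with $a^{n}b\leq q$, descend to the first coordinate, apply the hypothesis in $L_{1}$ and lift the conclusion back. For the converse, given compact $a_{1},b_{1}\in (L_{1})_{\ast }$ with $a_{1}^{n}b_{1}\leq q_{1}$, I test $q$ on the compact elements $(a_{1},0)$ and $(b_{1},0)$ and read off $a_{1}^{n}\leq q_{1}$ or $a_{1}^{n-1}b_{1}\leq q_{1}$. The weakly quasi $n$-absorbing versions run verbatim, with the single change that $a^{n}b\neq 0_{L}$ corresponds to $a_{1}^{n}b_{1}\neq 0_{L_{1}}$, so the nonzero hypothesis also descends cleanly to the first coordinate.

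The point that needs the most care, and where I expect the real obstacle to lie, is any claim that a genuinely two-sided element $q_{1}\times q_{2}$ (both coordinates proper) is quasi $n$-absorbing. The coordinate collapse no longer decouples the two conclusions: $a^{n}b\leq q$ splits as $a_{1}^{n}b_{1}\leq q_{1}$ together with $a_{2}^{n}b_{2}\leq q_{2}$, and the factor hypotheses only give, in coordinate $i$, that $a_{i}^{n}\leq q_{i}$ or $a_{i}^{n-1}b_{i}\leq q_{i}$; these two disjunctions can be realized on opposite coordinates, so neither global inequality $a^{n}\leq q$ nor $a^{n-1}b\leq q$ need follow. I would therefore expect such a statement either to carry an extra hypothesis forcing one factor to be improper (reducing to the clean case above) or to be accompanied by an explicit counterexample; the natural witnesses to the failure are $a=(a_{1},a_{2})$, $b=(b_{1},b_{2})$ chosen so that $a_{1}^{n}\leq q_{1}$ but $a_{1}^{n-1}b_{1}\not\leq q_{1}$ while $a_{2}^{n-1}b_{2}\leq q_{2}$ but $a_{2}^{n}\not\leq q_{2}$, for which $a^{n}b\leq q$ holds yet both conclusions fail. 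So my plan is to dispatch the ``improper in one coordinate'' statements by the coordinate-collapse argument, handle the weakly versions identically, and treat any fully two-sided claim separately, watching for this mismatch of disjunctions across coordinates as the crux.
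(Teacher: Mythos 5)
Your proof of the two statements that actually constitute this theorem is correct and essentially the same as the paper's: reduce coordinatewise, prove the forward direction by descending $a^{n}b\leq (q_{1},1_{L_{2}})$ to $a_{1}^{n}b_{1}\leq q_{1}$ and lifting back, and prove the converse by testing $(q_{1},1_{L_{2}})$ on compact pairs with a frozen second coordinate (you freeze at $0_{L_{2}}$, the paper at $1_{L_{2}}$; both are compact, so both work), with your explicit characterization of $L_{\ast}$ as pairs of compact elements making precise what the paper uses tacitly. The only slip is in material beyond this theorem: your claim that the weakly versions run verbatim is false, since $a^{n}b\neq 0_{L}$ only means \emph{some} coordinate is nonzero and so does not descend to $a_{1}^{n}b_{1}\neq 0_{L_{1}}$ --- whereas your closing observation about genuinely two-sided elements $(q_{1},q_{2})$ is exactly right and anticipates why the paper's subsequent theorem on $L_{1}\times\cdots\times L_{k}$ must pass to $m=\max\{n_{1},\ldots,n_{k}\}+1$.
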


\begin{enumerate}
\item $q_{1}$ is a quasi $n$-absorbing element of $L_{1}$ if and only if $%
(q_{1},1_{L_{2}})$ is a quasi $n$-absorbing element of $L.$

\item $q_{2}$ is a quasi $n$-absorbing element of $L_{2}$ if and only if $%
(1_{L_{1}},q_{2})$ is a quasi $n$-absorbing element of $L.$
\end{enumerate}

\begin{proof}
(1) Suppose that $q_{1}$ is a quasi $n$-absorbing element of $L_{1}.$ Let $%
(a_{1},a_{2})^{n}(b_{1},b_{2})\leq (q_{1},1_{L_{2}})$ for some $a_{1},$ $%
b_{1}\in L_{1_{\ast }}$ and $a_{2},$ $b_{2}\in L_{2_{\ast }}$.$\ $Then $%
a_{1}^{n}b_{1}\leq q_{1}$ implies that either $a_{1}^{n}\leq q_{1}$ or $%
a_{1}^{n-1}b_{1}\leq q_{1}.$ It follows either $(a_{1},a_{2})^{n}\leq
(q_{1},1_{L_{2}})$ or $(a_{1},a_{2})^{n-1}(b_{1},b_{2})\leq
(q_{1},1_{L_{2}}).$ Thus $(q_{1},1_{L_{2}})$ is a quasi $n$-absorbing
element of $L.$ Conversely suppose that $(q_{1},1_{L_{2}})$ is a quasi $n$%
-absorbing element of $L$ and $a^{n}b\leq q_{1}$ for some $a,b\in L_{1_{\ast
}}$. Hence $(a,1_{L_{2}})^{n}(b,1_{L_{2}})\leq (q_{1},1_{L_{2}})$ which
implies that either $(a,1_{L_{2}})^{n}\leq (q_{1},1_{L_{2}})$ or $%
(a,1_{L_{2}})^{n-1}(b,1_{L_{2}})\leq (q_{1},1_{L_{2}}).$ So $a_{1}^{n}\leq
q_{1}$ or $a_{1}^{n-1}b_{1}\leq q_{1}$, as needed.

(2) It can be verified easily similar to (1).
\end{proof}

\begin{theorem}
\label{tcartes}Let $L=L_{1}\times ...\times L_{k}$ where $L_{i}^{{}}$ s are $%
C$-lattices for all $i=1,...,k$. If $q_{i}$ is a quasi $n_{i}$-absorbing
element of $L_{i}$ for all $i=1,...,k,$ then $(q_{1},...,q_{k})$ is a quasi $%
m$-absorbing element of $L$ where $m=\max \{n_{1},$...,$n_{k}\}+1.$
\end{theorem}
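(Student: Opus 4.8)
The plan is to avoid a direct coordinatewise verification and instead realize $(q_{1},\dots ,q_{k})$ as a finite meet of elements each built from a single $q_{i}$, so that the two results proved just above---the behaviour of quasi $n$-absorbing elements under (two-factor) products and the behaviour under finite meets with exponent $\max +1$---carry the whole argument. For each $i$ set $e_{i}=(1_{L_{1}},\dots ,1_{L_{i-1}},q_{i},1_{L_{i+1}},\dots ,1_{L_{k}})\in L$. Since the meet in $L$ is computed coordinatewise, the $i$-th coordinate of $\bigwedge _{i=1}^{k}e_{i}$ is $q_{i}\wedge \bigwedge _{j\neq i}1_{L_{j}}=q_{i}$, so $(q_{1},\dots ,q_{k})=\bigwedge _{i=1}^{k}e_{i}$; this displayed identity is the hinge of the proof.

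First I would check that each $e_{i}$ is a quasi $n_{i}$-absorbing element of $L$. Grouping the factors as $L=L_{i}\times L_{i}'$ with $L_{i}'=\prod _{j\neq i}L_{j}$ (a finite product of $C$-lattices is again a $C$-lattice, and compact elements of the product are exactly the tuples of compact elements), the element $e_{i}$ becomes $(q_{i},1_{L_{i}'})$ under this identification. The two-factor product theorem then applies verbatim and, because $q_{i}$ is quasi $n_{i}$-absorbing in $L_{i}$, gives that $e_{i}=(q_{i},1_{L_{i}'})$ is quasi $n_{i}$-absorbing in $L$. Now $e_{1},\dots ,e_{k}$ are quasi $n_{i}$-absorbing elements sitting in the single lattice $L$, so the finite-meet theorem yields that $\bigwedge _{i=1}^{k}e_{i}$ is quasi $m$-absorbing with $m=\max \{n_{1},\dots ,n_{k}\}+1$; by the identity above this element is precisely $(q_{1},\dots ,q_{k})$, finishing the proof.

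The only genuinely delicate point---and the reason the exponent must be $\max \{n_{i}\}+1$ and not $\max \{n_{i}\}$---is inherited entirely from the meet theorem, so the main obstacle is checking its hypotheses rather than producing any new estimate. It is worth recording why the shift appears at the coordinate level, since that is what the meet theorem is secretly exploiting: for compact $a=(a_{1},\dots ,a_{k})$ and $b=(b_{1},\dots ,b_{k})$ with $a^{m}b\leq (q_{1},\dots ,q_{k})$ one has $a_{i}^{m}b_{i}\leq q_{i}$, and applying the quasi $n_{i}$-absorbing property to $a_{i}^{n_{i}}(a_{i}^{m-n_{i}}b_{i})\leq q_{i}$ gives $a_{i}^{n_{i}}\leq q_{i}$ or $a_{i}^{m-1}b_{i}\leq q_{i}$. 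Because $m-1=\max \{n_{j}\}\geq n_{i}$ for every $i$, the first case still forces $a_{i}^{m-1}b_{i}\leq a_{i}^{m-1}\leq a_{i}^{n_{i}}\leq q_{i}$; thus every coordinate satisfies $a_{i}^{m-1}b_{i}\leq q_{i}$, that is $a^{m-1}b\leq (q_{1},\dots ,q_{k})$, which is the branch of the quasi $m$-absorbing condition that always holds here. Had the exponent been only $\max \{n_{i}\}$, this step would break at the coordinate attaining the maximum, which is exactly where the extra $+1$ is spent.
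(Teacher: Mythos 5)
Your proof is correct, but it takes a genuinely different route from the paper's. The paper argues directly coordinatewise: from $(a_1,\dots,a_k)^m(b_1,\dots,b_k)\leq (q_1,\dots,q_k)$ it writes $a_i^{m}b_i=a_i^{n_i}(a_i^{m-n_i}b_i)\leq q_i$, applies the quasi $n_i$-absorbing hypothesis in each coordinate, and finishes with a case split over which coordinates satisfy $a_i^{n_i}\leq q_i$ --- essentially the computation you relegate to your final paragraph (and your version is in fact sharper: you observe that the branch $a^{m-1}b\leq (q_1,\dots,q_k)$ always holds, because $m-1\geq n_i$ converts $a_i^{n_i}\leq q_i$ into $a_i^{m-1}b_i\leq q_i$, a point the paper's ``without loss of generality'' step uses tacitly without writing it down). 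Your main argument instead factors the theorem through two earlier results: the identity $(q_1,\dots,q_k)=\bigwedge_{i=1}^{k}e_i$ with $e_i=(1,\dots,1,q_i,1,\dots,1)$, the two-factor product theorem applied to $L\cong L_i\times \prod_{j\neq i}L_j$ to see that each $e_i$ is quasi $n_i$-absorbing in $L$, and the finite-meet result to conclude that the meet is quasi $m$-absorbing with $m=\max\{n_1,\dots,n_k\}+1$. What this buys is modularity and a conceptual explanation of the $+1$ (it is exactly the meet theorem's $+1$); what it costs is two hypotheses you flag but do not verify in detail: that a finite product of $C$-lattices is again a $C$-lattice whose compact elements are precisely the tuples of compact elements (needed both to invoke the two-factor theorem with second factor $\prod_{j\neq i}L_j$ and so that the meet result, whose proof multiplies compact elements, is valid in $L$), and that the finite-meet result applies to arbitrary multiplicative lattices --- in the paper it appears, missing its theorem header, immediately after the theorem on totally ordered lattices, but its proof nowhere uses the total order, so your application of it to the (far from totally ordered) product lattice is legitimate.
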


\begin{proof}
Suppose that $(a_{1},...,a_{k})^{m}(b_{1},...,b_{k})\leq (q_{1},...,q_{k})$
for $(a_{1},...,a_{k}),(b_{1},...,b_{k})\in L_{\ast }$ and $m=\max \{n_{1},$%
...,$n_{k}\}+1$. Hence $a_{i}^{m}b_{i}=a_{i}^{n_{i}}(a_{i}^{m-n_{i}}b_{i})%
\leq q_{i}$ for all $i=1,..,k$. Since each $q_{i}$ is a quasi $n_{i}$%
-absorbing element, we have either $a_{i}^{n_{i}}\leq q_{i}$ or $%
a_{i}^{m-1}b_{i}=a_{i}^{n_{i}-1}(a_{i}^{m-n_{i}}b_{i})\leq q_{i}$ for all $%
i=1,..,k$. If $a_{i}^{n_{i}}\leq q_{i}$ for all $i=1,...,k,$ then $%
(a_{1},...,a_{k})^{m}\leq (q_{1},...,q_{k}).$ Without loss generality,
suppose that $a_{i}^{n_{i}}\leq q_{i}$ for all $1\leq i\leq j$ and $%
a_{i}^{m-1}b_{i}\leq q_{i}$ for all $j+1\leq i\leq k,$ for some $j=1,..k.$
Thus $(a_{1},...,a_{k})^{m-1}(b_{1},...,b_{k})\leq (q_{1},...,q_{k}),$ so we
are done.
\end{proof}

\begin{definition}
\label{d2}A proper element $q$ of $L$ is said to be a strongly quasi $n$%
-absorbing element of $L$ if whenever $a,b\in L$ (not necessarily compact)
with $a^{n}b\leq q$ implies that either $a^{n}\leq q$ or $a^{n-1}b\leq q$.
\end{definition}

It is clearly seen that every strongly quasi $n$-absorbing element of $L$ is
quasi $n$-absorbing.

\begin{theorem}
Let$\ L$ be a principal element lattice. The following statements are
equivalent:
\end{theorem}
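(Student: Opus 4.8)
The plan is to reuse the argument of Theorem~\ref{t1} essentially verbatim, the only change being that Definition~\ref{d2} lets $a,b$ range over all of $L$ rather than over the compact elements $L_\ast$. The principal element hypothesis makes this harmless: every element of $L$---and hence every product $a^nb$---is principal (a finite product of meet principal elements is again meet principal), so the order-to-divisibility translation used below applies to arbitrary elements. I expect the three conditions to read: (1) every proper element of $L$ is a strongly quasi $n$-absorbing element; (2) for all $a,b\in L$ there exist $c,d\in L$ with $a^n=ca^nb$ or $a^{n-1}b=da^nb$; and (3) for all $a_1,\dots,a_{n+1}\in L$ there exist $c,d\in L$ with $(a_1\wedge\cdots\wedge a_n)^n\le c\,a_1\cdots a_{n+1}$ or $(a_1\wedge\cdots\wedge a_n)^{n-1}a_{n+1}\le d\,a_1\cdots a_{n+1}$.

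For (1)$\Rightarrow$(2) I would fix $a,b\in L$ and set $q=a^nb$. If $q=1$, then from $a^nb\le a^n\le 1$ we get $a^n=1=a^nb$, so $c=1$ works; otherwise $q$ is proper, hence strongly quasi $n$-absorbing by (1), and feeding the trivial relation $a^nb\le q$ into Definition~\ref{d2} gives $a^n\le q$ or $a^{n-1}b\le q$. The essential step is then the passage from $\le$ to a divisibility identity: since $q=a^nb$ is meet principal, specializing the meet principal identity $x\wedge te=((x:e)\wedge t)e$ at $t=1$, $e=q$ yields $x\wedge q=(x:q)q$, so $x\le q$ forces $x=(x:q)q$. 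Taking $x=a^n$ or $x=a^{n-1}b$ produces $c=(a^n:q)$ or $d=(a^{n-1}b:q)$. Conversely, for (2)$\Rightarrow$(1), if $q$ is proper with $a^nb\le q$, then (2) gives $a^n=ca^nb\le cq\le q$ or $a^{n-1}b=da^nb\le dq\le q$, using $c,d\le 1$; since $a,b\in L$ were arbitrary, this is exactly strong quasi $n$-absorption.

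The equivalence (2)$\Leftrightarrow$(3) is a formal substitution. For (2)$\Rightarrow$(3) I would put $a=a_1\wedge\cdots\wedge a_n$ and $b=a_{n+1}$ and use $a^nb=(a_1\wedge\cdots\wedge a_n)^n a_{n+1}\le a_1\cdots a_{n+1}$ to turn the equality $a^n=ca^nb$ into the inequality $a^n\le c\,a_1\cdots a_{n+1}$, and likewise for the second alternative. For (3)$\Rightarrow$(2) I would take $a_1=\cdots=a_n=a$ and $a_{n+1}=b$, so that $a_1\wedge\cdots\wedge a_n=a$ and $a_1\cdots a_{n+1}=a^nb$, obtaining $a^n\le ca^nb$ or $a^{n-1}b\le da^nb$; these sharpen to the equalities of (2) via the reverse bounds $ca^nb\le a^nb\le a^n$ and $da^nb\le a^nb\le a^{n-1}b$. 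The only non-routine point, and the place where the principal hypothesis is genuinely needed, is the order-to-divisibility translation in (1)$\Rightarrow$(2); a minor side-obstacle is remembering to dispose of the degenerate case $a^nb=1$ separately, so that a bona fide proper element is available to invoke hypothesis (1).
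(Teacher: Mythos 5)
Your proof is correct and takes the same route as the paper, whose entire proof of this theorem is the remark that the argument of Theorem~\ref{t1} carries over. One caveat: since you had to reconstruct the three conditions, note that the paper states (2) and (3) \emph{without} the existential multipliers, namely (2) reads ``for all $a,b\in L$, $a^{n}=a^{n}b$ or $a^{n-1}b=a^{n}b$'' and (3) likewise has plain $a_{1}a_{2}\cdots a_{n+1}$ on the right-hand side with no $c,d$. The discrepancy is harmless: your multiplier versions are equivalent to the paper's, precisely by the reverse bounds you yourself invoke ($ca^{n}b\leq a^{n}b\leq a^{n}$ and $da^{n}b\leq a^{n}b\leq a^{n-1}b$), so your argument does establish the stated theorem. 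In fact those bounds show one can always take $c=d=1$, which makes the meet-principal extraction $c=(a^{n}:q)$ --- the only place you use principality in (1)$\Rightarrow$(2) --- dispensable, so the principal element hypothesis is not genuinely needed there (nor, for the same reason, in the corresponding step of Theorem~\ref{t1}). Your explicit handling of the degenerate case $a^{n}b=1$ is a point where your write-up is more careful than the paper's, which applies hypothesis (1) to $q=a^{n}b$ without checking properness.
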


\begin{enumerate}
\item Every proper element of $L$ is a strongly quasi $n$-absorbing element
of $L.$

\item For all $a,b\in L$, $a^{n}=a^{n}b$ or $a^{n-1}b=a^{n}b$,

\item $(a_{1}\wedge a_{2}\wedge ...\wedge a_{n})^{n}\leq
a_{1}a_{2}...a_{n+1} $ or $(a_{1}\wedge a_{2}\wedge ...\wedge
a_{n})^{n-1}a_{n+1}\leq a_{1}a_{2}...a_{n+1}$ for all $a_{1},$ $a_{2},$ $%
...,a_{n+1}\in L.$
\end{enumerate}

\begin{proof}
This can be easily shown using the similar argument in Theorem \ref{t1}.
\end{proof}

\begin{theorem}
Let $q$ be a proper element of $L.$ Then the following statements hold:
\end{theorem}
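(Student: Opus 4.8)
The plan is to handle each part by reducing the claim to the defining implication for quasi $n$-absorbing elements together with the residual characterization established earlier, namely that $q$ is quasi $n$-absorbing if and only if $(q:a^{n})=(q:a^{n-1})$ for every $a\in L_{\ast}$ with $a^{n}\nleq q$. Since $L$ is a $C$-lattice, every element is a join of compact elements and the product is completely join distributive, so in each part I would first test the relevant inequality on compact generators $a,b\in L_{\ast}$ and then pass to arbitrary elements by taking joins; join distributivity guarantees that the two conclusions $a^{n}\leq q$ and $a^{n-1}b\leq q$ behave well under these joins, so it suffices to argue on $L_{\ast}$.

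For any part asserting that a derived element (a residual $(q:c)$, a radical $\sqrt{q}$, or a localization $q_{m}$) is again quasi $n$-absorbing, I would verify the definition directly: assume $a^{n}b$ lies below the derived element, unwind the definition of that element into an inequality involving $q$ itself, invoke the quasi $n$-absorbing hypothesis on $q$ to split into the cases $a^{n}\leq q$ or $a^{n-1}b\leq q$, and then repackage each case as the corresponding statement about the derived element. The exponent bookkeeping here mirrors the argument of Theorem \ref{relations}(5) and of Theorem \ref{tcartes}, where a product $a^{m}b$ is rewritten as $a^{n}(a^{m-n}b)$ so that the hypothesis at level $n$ can be applied, and then reassembled. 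For localization parts the pattern of the localization theorem above applies: a relation $a_{m}^{n}b_{m}\leq q_{m}$ lifts to $ua^{n}b\leq q$ for some $u\nleq m$, and I would absorb $u$ on the correct side before applying the hypothesis, using $u_{m}=1_{m}$ to descend again.

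The main obstacle I anticipate is the asymmetry between the two alternatives $a^{n}\leq q$ and $a^{n-1}b\leq q$: unlike the prime or $n$-absorbing situation, these are not symmetric in $a$ and $b$, since one alternative is a pure power while the other removes exactly one factor of $a$. When a derived operation such as residuation by a compact element or localization is applied, the extra factor that gets introduced or cleared must be attached to the correct alternative, so that the power $a^{n}$ versus the mixed term $a^{n-1}b$ distinction is preserved rather than collapsed. Tracking which side this factor lands on—and verifying that dividing through by it does not disturb the splitting—is the delicate point; once that bookkeeping is settled, each part reduces to a routine application of Definition \ref{d1} and the residual characterization.
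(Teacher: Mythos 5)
Your proposal does not engage with the statement it is meant to prove. The theorem in question is the last one in the paper: it gives two sufficient conditions for $q$ to be a \emph{strongly} quasi $n$-absorbing element, namely that the defining implication need only be verified for elements $a,b\in L$ (respectively $a_{1},a_{2},...,a_{n+1}\in L$) satisfying the extra constraint $q\leq a\wedge b$ (respectively $q\leq a_{1}\wedge a_{2}\wedge ...\wedge a_{n+1}$). Nothing in this theorem involves the residual characterization $(q:a^{n})=(q:a^{n-1})$, radicals, localizations, or quotient lattices; those belong to other results in the paper, and your plan reads as generic machinery aimed at them rather than at this statement. The key idea the paper uses, and which is entirely absent from your proposal, is a substitution trick: given arbitrary $x,y\in L$ with $x^{n}y\leq q$, put $a=x\vee q$ and $b=y\vee q$. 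Expanding $a^{n}b=(x\vee q)^{n}(y\vee q)$ by join distributivity, every joinand either equals $x^{n}y$ (which is below $q$ by assumption) or contains a factor of $q$ (hence is below $q$), so $a^{n}b\leq q$; and $q\leq a\wedge b$ holds trivially. The hypothesis then yields $a^{n}\leq q$ or $a^{n-1}b\leq q$, and since $x\leq a$ and $y\leq b$ this gives $x^{n}\leq q$ or $x^{n-1}y\leq q$, which is exactly the strongly quasi $n$-absorbing condition. Part (2) is the same argument with $a_{i}=x_{i}\vee q$.

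A second, independent problem is your claim that one may test the inequalities on compact generators and pass to joins because join distributivity makes the two alternatives $a^{n}\leq q$ and $a^{n-1}b\leq q$ behave well under joins. This is unfounded, and it is precisely the point of Definition \ref{d2}: the strong notion is defined on arbitrary elements of $L$ because the disjunction does \emph{not} pass from compact approximations to their join --- different compact pairs below $(a,b)$ may satisfy different alternatives, and there is no way in general to merge these choices into a single alternative for $(a,b)$ itself. If that reduction were valid, quasi $n$-absorbing and strongly quasi $n$-absorbing would coincide and Definition \ref{d2} would be redundant. So even as a general-purpose strategy, the compact-generator reduction you lean on would fail exactly for the theorem you were asked to prove.
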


\begin{enumerate}
\item If $a^{n}b\leq q\leq a\wedge b$ where $a,b\in L$ implies that $%
a^{n}\leq q$ or $a^{n-1}b\leq q,$ then $q$ is a strongly quasi $n$-absorbing
element of $L.$

\item If $a_{1}a_{2}...a_{n+1}\leq q\leq a_{1}\wedge a_{2}\wedge ...\wedge
a_{n+1}$ where $a_{1},a_{2},...,a_{n+1}\in L$ implies that $%
a_{1}...a_{i-1}a_{i+1}...a_{n+1}\leq q,$ for some $1\leq i\leq n+1$ then $q$
is a strongly quasi $n$-absorbing element of $L.$
\end{enumerate}

\begin{proof}
(1) Let $x,y\in L$ with $x^{n}y\leq q$. We show that $x^{n}\leq q$ or $%
x^{n-1}y\leq q.$ Now put $a=x\vee q$ and $b=y\vee q.$ Hence we conclude $%
a^{n}b\leq q\leq a\wedge b,$ and so $a^{n}\leq q$ or $a^{n-1}b\leq q$ by
(1). It follows $x^{n}\leq q$ or $x^{n-1}y\leq q.$

(2) It can be easily verified similar to (1).
\end{proof}

\end{document}